\newtheorem{thm}{Theorem}[section]
\newtheorem{conjecture}[thm]{Conjecture}
\newtheorem{cor}[thm]{Corollary}
\newtheorem{lem}[thm]{Lemma}
\newtheorem{cla}[]{Claim}
\newtheorem{case}{Case}
\numberwithin{subcase}{case}
\newcommand{\h}{\mathcal H}
\newcommand{\X}{\mathcal X}
\newcommand{\K}{\mathcal K}
\newcommand{\C}{\mathcal C}
\renewcommand{\P}{\mathcal P}
\title{Partitioning 2-coloured complete $k$-uniform hypergraphs into monochromatic $\ell$-cycles}
\author[1]{Sebasti\'an Bustamante\thanks{Both authors acknowledge support by Millenium Nucleus Information and Coordination in Networks ICM/FIC RC130003.}$^{,}$\thanks{The first author also was supported by CONICYT Doctoral Fellowship 21141116.}$^{,}$}
\author[1]{Maya Stein$^{*,}$\thanks{The second author also received support by Fondecyt Regular grant 1140766 and CMM-Basal AFB 170001.}$^{,}$}
\affil[1]{Department of Mathematical Engineering, University of Chile}
\begin{document}

\maketitle

\begin{abstract}
	We show that for all $\ell, k, n$ with $\ell \leq k/2$ and  $(k-\ell)$ dividing $n$ the following hypergraph-variant of Lehel's conjecture  is true. Every $2$-edge-colouring of the $k$-uniform complete hypergraph $\K_n^{(k)}$ on~$n$ vertices has at most two disjoint monochromatic $\ell$-cycles in different colours that together cover all but at most $4(k-\ell)$ vertices. If $\ell \leq k/3$, then at most two $\ell$-cycles cover all but at most $2(k-\ell)$ vertices. \\ Furthermore, we can cover all vertices with at most $4$ ($3$ if $\ell\leq k/3$) disjoint monochromatic  $\ell$-cycles.
\end{abstract}

\section{Introduction}

Cycle partitioning problems originated in Lehel's Conjecture~\cite{Aye79}, 
which states that every 2-edge-colouring of the complete graph $K_n$ contains two disjoint monochromatic cycles in different colours covering all vertices of~$K_n$, where vertices and single edges  count as (degenerate) cycles.
Lehel's Conjecture was confirmed for large $n$ in~\cite{All08,LRS98}, and for all $n$ by Bessy and Thomass\'e~\cite{BT10}. 

Not much is known for extensions
of this question to $k$-uniform hypergraphs. 
Probably the most flexible concept for cycles in hypergraphs is the notion of {\em  $\ell$-cycles,} which are  $k$-uniform hypergraphs with a cyclic ordering of their vertices and a cyclic ordering of their edges such that every edge contains $k$ consecutive vertices, and consecutive edges intersect in exactly $\ell$ vertices. 
We also consider two edges intersecting in $2\ell$ vertices as a cycle, and vertex sets of size $k - \ell$  as {\em  degenerate cycles}\footnote{We could define any proper subset (of any size) of an edge as a  degenerate edge/cycle, but in our arguments we only need to consider sets of size $k-\ell$ anyway.}. Notice that $1$-cycles and $(k-1)$-cycles often appear in the literature as {\em loose} and {\em tight} cycles, respectively.

It follows from work of Gy\'arf\'as and S\'ark\"ozy~\cite{GS14} that the number of monochromatic 1-cycles needed to partition any 2-edge-coloured $\K_n^{(k)}$ is bounded by a function in $k$.\footnote{The actual result in~\cite{GS14} is on cycle partitioning in $r$-coloured complete hypergraphs (for arbitrary $r\geq 2$), following a recent surge of activity around cycle partitioning in $r$-coloured complete (and other) graphs. For an overview of the area, we recommend the  surveys~\cite{FLM15} and~\cite{Gya16}.} 
The same authors conjectured~\cite{Gya16, GS14} that any 2-edge-coloured $\K_n^{(k)}$ has two disjoint monochromatic 1-paths (an $\ell$-path is obtained from an $\ell$-cycle by deleting one edge),  together covering all but at most $k-2$ vertices, and show this is best possible. This conjecture has recently been confirmed by Lu, Wang and Zhang~\cite{LuWangZhang}.

The $\ell$-cycle partition problem for larger $\ell$ has not been resolved so far.
In~\cite{BHS17},  H\'an and the present authors use hypergraph regularity to prove that all but $o(n)$ of the vertices of $\K_n^{(3)}$ can be covered by two monochromatic vertex-disjoint $2$-cycles of different colours, and an analogous statement for $1$-cycles also holds.

Here we show that for arbitrary $k$, and  $\ell\leq k/2$ the bound on the number of uncovered vertices can be improved to a constant (depending on $k$). More precisely, all but at most
 $5(k-\ell)-1$ vertices can be covered by two disjoint  monochromatic $\ell$-cycles of different colours
 \footnote{For later convenience, we state our results requiring that $k-\ell$ divides $n$ (the order of the host hypergraph). But clearly, our theorem immediately implies that for all $\ell, k, n\in \mathbb N$ with
 $0<\ell \leq k/2$, and any 2-edge-colouring of~$\K_n^{(k)}$  there are two vertex-disjoint monochromatic $\ell$-cycles in different colours together covering all but at most $5(k-\ell)-1$ vertices.
}.

\begin{thm}\label{thm:RB-ell-cycles}
 Let $\ell, k, n\in \mathbb N$ such that
 $0<\ell \leq k/2$ and $k-\ell$ divides $n$. Let any 2-edge-colouring of~$\K_n^{(k)}$ be given.
 \begin{enumerate}[(a)] 
 \item There are two vertex-disjoint monochromatic $\ell$-cycles in different colours together covering all but at most $4(k-\ell)$ vertices.
 \item
If $\ell \leq k/3$, the two  $\ell$-cycles cover all but at most $2(k-\ell)$ vertices.
	\end{enumerate}\end{thm}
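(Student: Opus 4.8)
The plan is a direct extremal argument. Write $d:=k-\ell$, and note that $\ell\le k/2$ forces $\ell\le d$, while $d\mid n$ by hypothesis. First dispose of the case that $\K_n^{(k)}$ has no edge of one colour: then a single $\ell$-cycle through all $n/d$ blocks of size $d$ covers everything, and we are done. So assume both colours occur. Consider all pairs $(P,Q)$ of vertex-disjoint monochromatic $\ell$-paths with $P$ red and $Q$ blue, counting a single edge and a single $d$-set as (degenerate) $\ell$-paths. Among all such pairs choose one with $|V(P)|+|V(Q)|$ maximum and, subject to that, with $\min\{|V(P)|,|V(Q)|\}$ maximum; let $U$ be the set of vertices covered by neither. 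At the very end we close $P$ and $Q$ into $\ell$-cycles at the cost of a bounded number of further vertices; since a (possibly degenerate) $\ell$-cycle always spans a multiple of $d$ vertices and $d\mid n$, the number of finally uncovered vertices is automatically a multiple of $d$, so it suffices to derive a contradiction from $|U|\ge 5d$ in general, and from $|U|\ge 3d$ when $\ell\le k/3$.

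Assume then that $|U|$ is large and fix pairwise disjoint $d$-sets $U_1,U_2,\dots\subseteq U$. The structural fact we exploit, valid because $\ell\le k/2$, is that the vertices of an $\ell$-path (or $\ell$-cycle) decompose into \emph{link sets} of size $\ell$ and \emph{private sets} of size $k-2\ell$, each edge being the union of two consecutive link sets with one private set in between. Hence to extend a red $\ell$-path $P$ by a block of $U$ it is enough to find a red edge $\alpha\cup Y$ with $\alpha$ an endpoint ($\ell$-set) of $P$ and $Y$ a $d$-subset of $U$, since $Y$ then splits into a new private set and a new endpoint link. If such a red edge exists for either endpoint of $P$ we contradict maximality; so every edge $\alpha\cup Y$ and $\beta\cup Y$ with $Y\in\binom{U}{d}$ (here $\alpha,\beta$ are the endpoints of $P$) is blue, and symmetrically all the corresponding ``extension edges'' at the endpoints of $Q$ are red. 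The crucial step is to use these two forced families: using that all extension edges of $P$ into $U$ are blue, one transfers an endpoint link of $P$ to $Q$, reroutes both paths through the blocks of $U$ via monochromatic bridging edges, and verifies that the extremal quantity strictly increases. The degenerate cases reduce to this main case by easier arguments — e.g.\ if $P$ is empty then no red edge is disjoint from $Q$, and successively shrinking $Q$ to liberate a $d$-block either produces a red edge disjoint from the shrunken $Q$ (which improves the pair) or eventually makes $Q$ empty, i.e.\ all edges blue, the trivial case.

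The main obstacle is exactly this routing/exchange step: one must show that the colour information forced by extremality always suffices to build the short monochromatic $\ell$-paths needed to absorb parts of $U$ or to reconnect $P$ with $Q$, and one must bookkeep vertex counts carefully, since turning $\ell$-paths back into $\ell$-cycles shifts sizes by amounts not divisible by $d$, and the slack of $4d$ (respectively $2d$) is precisely what compensates for this. When $\ell\le k/3$ one has $d\ge 2\ell$, so each $d$-set in $U$ contains two disjoint $\ell$-sets; this extra room makes the bridging edges easier to place and roughly halves the number of spare blocks that must be sacrificed, yielding the improved bound in (b). Finally, covering \emph{all} $n$ vertices by at most $4$ (respectively $3$) monochromatic $\ell$-cycles follows from (a) (respectively (b)) by covering the at most $4$ (respectively $2$) leftover $d$-blocks with at most two further $\ell$-cycles: a single $d$-block is a degenerate cycle, and when $\ell\le k/3$ one checks that the two remaining blocks can be merged into a single $2$-edge $\ell$-cycle.
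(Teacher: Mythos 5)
Your proposal outlines a plausible general strategy (extremal choice of two disjoint monochromatic paths, then rerouting), but it leaves precisely the hard parts unaddressed, and as stated the exchange step does not produce a contradiction.

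First, the \emph{transfer/reroute step}. You claim that if every edge $\alpha\cup Y$ with $\alpha$ an endpoint link of $P$ and $Y\in\binom{U}{d}$ is blue, one can ``transfer $\alpha$ to $Q$'' and improve the extremal quantity. But $\alpha$ is on $P$: to free it you must delete the whole last edge of $P$, which costs $k-\ell$ vertices (not just $\ell$), and then the natural bridging edge $\beta\cup Y''\cup\alpha$ (with $\beta$ an endpoint of $Q$) has two possible colours. When it is red you gain nothing in $|V(P)|+|V(Q)|$, and the secondary tiebreak $\min\{|V(P)|,|V(Q)|\}$ does not necessarily move in the right direction. The paper avoids exactly this dead end by never comparing two \emph{paths}: Lemma~\ref{lem:cycle+path} first converts the blue-red $\ell$-path of Lemma~\ref{lem:RBpath} into a monochromatic \emph{cycle} $\C_B$ together with a disjoint monochromatic path $\P_R$ of the other colour, and the extremal quantity is the length of $\C_B$ subject to a fixed amount of leftover. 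That asymmetry (cycle $\cup$ path, with $\C_B$ maximal) is what makes the colour deductions $\eqref{as:w1w2}$--$\eqref{ui}$ and Claim~\ref{cla:longcycle} go through. Your symmetric pair $(P,Q)$ does not have this feature.

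Second, the \emph{closing step} is dismissed in one sentence (``close $P$ and $Q$ into $\ell$-cycles at the cost of a bounded number of further vertices''), but this is where almost all of the paper's work goes. Having a cycle $\C_B$ and a disjoint path $\P_R$, if the closing edges $w^t$ of $\P_R$ are all blue, one cannot just ``pay a bounded cost'': one must construct a \emph{red} cycle on the vertex set of $\C_B$ from the forced-red shortcut edges $g_i$, $h_i$ (or $q_i,a,a',b,c,d,d',e$ in part~(b)), splice $\P_R$ into it, and run a case analysis on $m_c\pmod 3$ to avoid parity obstructions. Nothing in your sketch corresponds to the edges $g_i,h_i,q_i$ or to this case split, and without it the specific constants $4(k-\ell)$ and $2(k-\ell)$ cannot be obtained — indeed, you never explain where $5d$ and $3d$ come from, and they do not match the theorem's bounds after closing.

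Finally, your last paragraph proves the Corollary (covering all vertices by 4, resp.\ 3, cycles), which is not the statement under discussion; it uses a Ramsey-type fact about a $2$-edge $\ell$-cycle that the paper cites separately. In short, the proposal is a reasonable high-level heuristic, but the exchange argument as stated fails to give a strict improvement, and the machinery needed to actually close the path into a second cycle is missing.
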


Our proof does not use Bessy and Thomass\'e's theorem, nor does it rely on hypergraph regularity.
 
We suspect that a partition of all vertices into two cycles should always be possible. (It is not difficult to construct colourings which require at least two disjoint $\ell$-cycles for covering all the vertices, so this would be best possible.)

\begin{conjecture}
If $\ell, k, n\in \mathbb N$ with $n \equiv 0 \pmod{k-\ell}$, then every $2$-edge-colouring of $\K_n^{(k)}$ contains two vertex-disjoint monochromatic $\ell$-cycles in different colours covering all vertices. 
\end{conjecture}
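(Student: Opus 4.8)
We sketch a possible approach; it bootstraps Theorem~\ref{thm:RB-ell-cycles} via an extremal argument in the spirit of Bessy and Thomass\'e. Fix a $2$-edge-colouring of $\K_n^{(k)}$. (For $n$ bounded in terms of $k$ one argues directly, so assume $n$ is large.) Among all pairs $(R,B)$ where $R$ is a red monochromatic $\ell$-cycle, $B$ is a blue monochromatic $\ell$-cycle, and $V(R)\cap V(B)=\emptyset$ --- allowing degenerate cycles (single $(k-\ell)$-sets) and the empty cycle --- choose one with $|V(R)|+|V(B)|$ maximum and, subject to that, with $\min\{|V(R)|,|V(B)|\}$ maximum. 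Put $U:=V\setminus(V(R)\cup V(B))$. By Theorem~\ref{thm:RB-ell-cycles}(a) we have $|U|\le 4(k-\ell)$, and since $k-\ell$ divides each of $n$, $|V(R)|$, $|V(B)|$ (every $\ell$-cycle, degenerate or not, has a multiple of $k-\ell$ vertices), it divides $|U|$ too. It suffices to show $U=\emptyset$; assume not, so $|U|\ge k-\ell$, and fix $W\subseteq U$ with $|W|=k-\ell$. If we can enlarge $R$ or $B$ by absorbing $W$, or reorganise $V(R)\cup V(B)\cup W$ into a red $\ell$-cycle and a disjoint blue $\ell$-cycle together covering it, we contradict the maximality of $|V(R)|+|V(B)|$.

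\emph{The local absorption move.} Let $C$ be a monochromatic $\ell$-cycle of colour $c$ with edges $E_1,\dots,E_t$ in cyclic order. Deleting one edge $E_j$ and reconnecting $E_{j-1}$ to $E_{j+1}$ through a path of two new edges $F_1,F_2$ turns $C$ into a colour-$c$ $\ell$-cycle on $V(C)\cup W$ as long as $E_{j-1}\cap E_j\subseteq F_1$, $E_j\cap E_{j+1}\subseteq F_2$, $|F_1\cap F_2|=\ell$, $F_1\cup F_2=\bigl(E_j\setminus(E_{j-1}\cup E_{j+1})\bigr)\cup W$, and both $F_1,F_2$ have colour $c$. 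The set distributed over the two ``middles'' of $F_1,F_2$ has size $(k-2\ell)+(k-\ell)$, so --- using $\ell\le k/2$ to keep overlaps and middles of nonnegative size --- there is real freedom in how to split it. Since $C$ has $t=|V(C)|/(k-\ell)$ edges, which is large when $n$ is large, and each of the $t$ junctions admits many legal splits, a counting/pigeonhole argument should show that $W$ can be absorbed into $C$ unless the colouring near $C\cup W$ is rigid: essentially all $k$-sets formed by an anchor pair $\bigl(E_{j-1}\cap E_j,\,E_j\cap E_{j+1}\bigr)$ together with part of $W$ are monochromatic in colour $3-c$.

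\emph{Killing the obstruction.} By maximality the move above must fail for $C=R$ and for $C=B$; moreover no blue $k$-edge inside $U$ (combined with suitable blue anchor edges of $B$) may be usable to enlarge $B$, and symmetrically for $R$, so the colouring on $U$ and on the $U$--$(R\cup B)$ interface is rigid too. The plan is to contradict this by a \emph{global exchange}: cut $R$ at two junctions into red arcs $R',R''$ and $B$ at two junctions into blue arcs $B',B''$, then reassemble $R'$, $W$ and $B'$ into one \emph{blue} $\ell$-cycle --- precisely the forced ``wrong-colour'' $k$-edges supplied by the failed absorptions are the ones needed to make the new splices --- while the vertices not used by it are reorganised (again using the forced colours) into a disjoint red $\ell$-cycle; the hypothesis $k-\ell\mid n$ is exactly what lets one choose the four arc lengths so that both new cyclic orders close up. If one of $R,B$ is empty or very short, treat this first: if $B=\emptyset$, almost all of $V$ lies in $R$, and a red/blue dichotomy on the few $k$-edges meeting $U$ either enlarges $R$ or produces a nondegenerate blue cycle, reducing to the main case. \textbf{The main obstacle is the global exchange step.} For graphs this is where Bessy and Thomass\'e's proof does its real work; for $\ell$-cycles, the interaction between the cyclic edge structure (overlaps of size $\ell$, middles of size $k-2\ell$) and the forced colouring makes it much more delicate to decide which arcs can be spliced and in which order, and doing so with constants uniform in $k$ and $\ell$ --- rather than only for large $n$, as an asymptotic version would permit --- is what keeps the statement open. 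An alternative would be the absorption method: reserve a flexible monochromatic $\ell$-path that can swallow any $O(k)$ extra vertices, cover the rest via Theorem~\ref{thm:RB-ell-cycles}, then finish with the absorber --- but building such an absorber without pseudorandomness and coordinating it with the second colour runs into essentially the same difficulty.
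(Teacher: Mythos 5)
This statement is stated in the paper as an open conjecture; the authors prove it only in the special case $\ell=k/2$ (by partitioning $V(\K_n^{(k)})$ into $k/2$-sets, colouring the auxiliary complete graph on these sets, and invoking Bessy and Thomass\'e's theorem). So there is no proof in the paper to compare against, and your proposal does not close the gap either: it is a programme, not a proof, and you say so yourself.

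Concretely, two essential steps are asserted rather than established. First, in the ``local absorption move'' you claim that ``a counting/pigeonhole argument should show that $W$ can be absorbed \ldots unless the colouring near $C\cup W$ is rigid''; no such argument is given, and it is not clear what precise rigidity statement would come out of it --- an adversarial colouring can make every candidate pair $(F_1,F_2)$ fail for a different reason at each junction, so you would need to quantify and prove exactly which wrong-coloured $k$-sets are forced. Second, the ``global exchange'' step --- reassembling arcs of $R$, $W$ and arcs of $B$ into a new blue cycle while the leftover reorganises into a red cycle --- is precisely the hard combinatorial core (it is where Bessy and Thomass\'e's argument does its work in the graph case), and you explicitly flag it as ``the main obstacle'' that ``keeps the statement open.'' A proof attempt that identifies its own missing step as the reason the statement is open is a research plan, not a proof. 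If you want a complete result in this direction, the one case you can actually finish with the tools at hand is $\ell=k/2$, via the auxiliary-graph reduction to Bessy--Thomass\'e described in the paper; for general $\ell\le k/2$ the conjecture remains open.
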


An easy argument shows that 
for $\ell = k/2$ the conjecture is true. In order to see this, take any partition $\P$ of the vertices of $\K_n^{(k)}$ into sets $S_i, i \in [2n/k]$, of size $k/2$. Consider an auxiliary 2-edge-colouring of the complete graph on $\P$, giving $\{S_i,S_j\}$  the colour of  $S_i \cup S_j$ in $\K_n^{(k)}$. Bessy and Thomass\'e's theorem~\cite{BT10} yields two graph cycles, which correspond to  two disjoint  monochromatic $\ell$-cycles in different colours in~$\K_n^{(k)}$.

Also, we can obtain the following corollary from Theorem~\ref{thm:RB-ell-cycles}.
\begin{cor}
 Let $\ell, k, n\in \mathbb N$ such that
 $0<\ell \leq k/2$ and $k-\ell$ divides $n$. Then for any 2-edge-colouring of~$\K_n^{(k)}$, one can cover all the vertices of $\K_n^{(k)}$ with four vertex-disjoint monochromatic $\ell$-cycles, and if $\ell\leq k/3$, it can be done with three cycles instead of four. 
 \end{cor}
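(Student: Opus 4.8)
The plan is to invoke Theorem~\ref{thm:RB-ell-cycles} to obtain two vertex-disjoint monochromatic $\ell$-cycles $C_1,C_2$ in different colours, and then to cover the remaining vertex set $R := V(\K_n^{(k)})\setminus(V(C_1)\cup V(C_2))$ with at most two further monochromatic $\ell$-cycles. The starting observation is a divisibility fact: every monochromatic $\ell$-cycle spans a number of vertices divisible by $k-\ell$, since a degenerate cycle spans $k-\ell$ vertices and a cycle with $m$ edges spans exactly $m(k-\ell)$ vertices. As $k-\ell$ also divides $n$, we get $|R| = j(k-\ell)$ for some integer $j\ge 0$, with $j\le 4$ by Theorem~\ref{thm:RB-ell-cycles}(a) and $j\le 2$ when $\ell\le k/3$ by part~(b).

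The heart of the argument is the claim that, under any $2$-edge-colouring, every vertex set $W$ with $|W|=2(k-\ell)$ is the vertex set of a monochromatic $\ell$-cycle. It is enough to find two $k$-subsets $e_1,e_2\subseteq W$ of the same colour with $e_1\cup e_2 = W$: then $|e_1\cap e_2| = 2k - 2(k-\ell) = 2\ell$, so $\{e_1,e_2\}$ is a monochromatic two-edge $\ell$-cycle on $W$. Writing $s := k-2\ell\ge 0$ and taking complements inside $W$, this is the same as finding two disjoint $s$-subsets $X_1,X_2\subseteq W$ with the same colour on $W\setminus X_1$ and $W\setminus X_2$ (note $|W\setminus X_i|=k$). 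To do so, fix any $(2s+1)$ vertices $w_0,\dots,w_{2s}$ of $W$ (possible, since $|W|\ge 2s+2$), set $A_t := \{w_t,w_{t+1},\dots,w_{t+s-1}\}$ with indices mod $2s+1$, and observe that $A_0,A_s,A_{2s},\dots,A_{2s^2},A_0$ (subscripts mod $2s+1$) traces an odd cycle of length $2s+1$ (using $\gcd(s,2s+1)=1$) whose consecutive sets are pairwise disjoint. Colouring each $A_t$ by the colour of $W\setminus A_t$ cannot be a proper colouring of this odd cycle, so two consecutive — hence disjoint — sets receive the same colour, which is what we wanted. (If $s=0$, i.e.\ $\ell=k/2$, then $W$ is a single edge, which we may regard as a degenerate two-edge cycle; in any case the corollary is immediate for $\ell=k/2$, since two cycles already suffice by the remark following Theorem~\ref{thm:RB-ell-cycles}.)

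Granting the claim, the corollary follows by a short case analysis on $j$. If $j=0$ we are done with $C_1,C_2$. If $j=1$, add the degenerate cycle on $R$, for three cycles in total. If $j=2$, add the monochromatic $\ell$-cycle on $R$ furnished by the claim, again three cycles. If $j=3$, split $R$ into a set of size $k-\ell$ and a set of size $2(k-\ell)$, covered by a degenerate cycle and a two-edge $\ell$-cycle respectively, for four cycles. If $j=4$, split $R$ into two sets of size $2(k-\ell)$, each covered by a two-edge $\ell$-cycle, again four. Thus four cycles always suffice, and since $\ell\le k/3$ forces $j\le 2$, three cycles suffice in that range.

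The main obstacle is the claim on sets of size $2(k-\ell)$, the only place the colouring is genuinely used — it amounts to the non-bipartiteness (indeed, chromatic number $2\ell+2$) of the relevant Kneser-type graph. The rest is divisibility bookkeeping and the case split. One should pay some attention to the degenerate boundary $\ell=k/2$ (where $s=0$), and check that for $\ell<k/2$ the two $k$-sets produced are distinct, which holds because the corresponding $s$-sets $X_1,X_2$ are then nonempty and disjoint.
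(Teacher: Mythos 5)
Your proof is correct and follows the same high-level strategy as the paper: invoke Theorem~\ref{thm:RB-ell-cycles} to cover all but $j(k-\ell)$ vertices (with $j\le 4$, or $j\le 2$ when $\ell\le k/3$), then absorb the leftover with degenerate cycles and monochromatic two-edge $\ell$-cycles. The one place where your argument genuinely diverges is the key fact that any $2$-colouring of a set of $2(k-\ell)$ vertices carries a monochromatic two-edge $\ell$-cycle. The paper dualises exactly as you do, passing to $(k-2\ell)$-sets via complementation, but then simply cites Alon--Frankl--Lov\'asz for the bound $R(\text{$2$-edge matching of uniformity } r)\le 2r+1$. You instead give a short self-contained proof of that Ramsey bound by exhibiting an odd cycle in the relevant Kneser graph $K(2s+1,s)$ (the arithmetic progression with common difference $s$ modulo $2s+1$ and $\gcd(s,2s+1)=1$) and noting an odd cycle cannot be properly $2$-coloured. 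This is in fact the standard elementary proof of the upper bound in AFL for two-edge matchings, so you have unpacked the citation rather than replaced it; the benefit is that the corollary becomes entirely self-contained. Your handling of the boundary case $s=0$ (i.e.\ $\ell=k/2$) by falling back to the remark after the Conjecture is also fine, and the divisibility bookkeeping and case split on $j\in\{0,\dots,4\}$ match what the paper implicitly does.
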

 This follows directly from our main theorem together with the observation that  the Ramsey number\footnote{The two colour Ramsey number of a $k$-uniform hypergraph $\h$ is the least integer $R(\h)$ for which every blue-red colouring of $\K_{R(\h)}^{(k)}$ contains a monochromatic copy of $\h$.}  of the $k$-uniform $\ell$-cycle of length two is $2(k-\ell)$. This can be seen by observing that  any $2$-edge-colouring of  $\K := \K_{2(k-\ell)}^{(k)}$ naturally defines a $2$-edge-colouring of $\K^* := \K_{2(k-\ell)}^{(k-2\ell)}$  by giving any edge $e^*$ in $\K^*$ the colour of $V(\K)\setminus e^*$ in $\K$. Then a monochromatic matching of size two in $\K^*$ corresponds to a monochromatic $\ell$-cycle of length two in $\K$.  Now, results of Alon, Frankl and Lov\'asz~\cite{AFL86} imply that the Ramsey number of a $2$-edge matching of uniformity $r$ is at most $2r+1$, which, since $2(k-2\ell)+1< 2(k-\ell)$, is enough for our purposes. 

\section{Partition into a path and a cycle}

We will identify a hypergraph $\h$ with its edge set, so when we write $e \in \h$ it refers to the edge $e$ of $\h$. Let us go through some necessary notation.

For an $\ell$-path or $\ell$-cycle $\X$, we order the $k$ vertices of each edge in such a way that the last $\ell$ vertices of an edge $e_i$ are the first $\ell$ vertices of the edge $e_{i+1}$.
For an edge $e = \{v_1,\dots,v_k\}$, we write $V_I(e)$ to denote the vertex set $\{v_i \in e: i \in I\}$, let $e^{-}$  denote the vertex set $V_{[\ell]}(e)$, let $e^{+}$ denote the vertex set $V_{[k]\setminus[k-\ell]}(e)$, and use $\mathring{e}$ for the set $e \setminus (e^{-} \cup e^{+})$.

An  $\ell$-path $\P$ of length $m$ is a {\em blue-red $\ell$-path} if there is $m_0 \in [m]$, called {\em turning point}, such that the $\ell$-paths $\{e_i : i \in [m_0]\}$ and $\{ e_i : i \in [m] \setminus [m_0] \}$ are monochromatic and have different colours. 

One of the first results in the field of monochromatic partitions was Gerensc\'er and Gy\'arf\'as' observation \cite{GG67} that every two-edge-coloured complete graph has a spanning blue-red path. We extend this observation to $\ell$-paths in hypergraphs in the following lemma.

\begin{lem}\label{lem:RBpath}
 Let $\ell, k, n\in\mathbb N$ such that $0<\ell \leq k/2$ and $k-\ell$ divides~$n$.
 Then every 2-edge-colouring of $\K_n^{(k)}$ contains a blue-red $\ell$-path $\mathcal P$ with $|V(\mathcal P)|= n- k+2\ell$.
\end{lem}

\begin{proof}
 Take a longest blue-red $\ell$-path $\P$ in $\K_n^{(k)}$, with edges $e_i$ for $i \in [m]$ and turning point $m_0$.
 Assume that all $e_i$ with $ i \in [m_0]$ are blue and all later edges on $\P$ are  red.

 If the set $Z$ of all vertices not covered by $\P$ has size $k-2\ell$, we are done.
 So assume otherwise; then $Z$ contains at least $2k-3\ell$ elements.
 Fix three disjoint sets $Z_0, Z_1, Z_2 \subset Z$ with $|Z_0| = \ell$ and $|Z_1| = |Z_2| = k-2\ell$.
 Since~$\P$ is maximal, we know that $e_R = e_1^{-} \cup Z_1 \cup Z_0$ is red, $e_B = e_{m}^{+} \cup Z_1 \cup Z_0$ is  blue, and $m_0 \neq m$.
 
 By colour symmetry, we can assume the edge $e = e_{m_0}^+ \cup Z_2 \cup Z_0$ is red. Then $(\P \setminus \{e_{m_0}\}) \cup \{e,e_R\}$ is a blue-red $\ell$-path longer than $\P$, which contradicts the maximality of $\P$.
\end{proof}

Observe that the blue-red $\ell$-path $\P$ given by Lemma~\ref{lem:RBpath} is as large as possible, since $|V(\P)| \equiv \ell \pmod{k-\ell}$ and $k-\ell$ divides~$n$.

Now we can show that there are a monochromatic $\ell$-path and a monochromatic $\ell$-cycle that together cover almost all the vertices.

\begin{lem}\label{lem:cycle+path}
 Let $\ell, k, n\in\mathbb N$  such that $0<\ell \leq k/2$ and $n = n_0(k-\ell)$ for some integer $n_0 \geq 3$. Then every 2-edge-colouring of $\K_n^{(k)}$ contains an $\ell$-cycle~$\C$  and an $\ell$-path $\P$  with the following properties:
 \begin{enumerate}
  \item $\C$ and $\P$  are vertex-disjoint;
  \item $\C$ and $\P$ are each monochromatic but use distinct colours;
  \item $\C$ has at least two edges; 
  \item if $\P\neq\emptyset$, then $|V(\mathcal C)|+|V(\mathcal P)|\in\{n-k+2\ell, n-2k+3\ell\}$; and
  \item if $\P=\emptyset$, then $|V(\mathcal C)|=n-k+\ell$.
 \end{enumerate}
\end{lem}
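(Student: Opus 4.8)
The starting point is the spanning blue-red $\ell$-path $\P_0$ given by Lemma~\ref{lem:RBpath}, with edges $e_1,\dots,e_m$ and turning point $m_0$, say the edges $e_1,\dots,e_{m_0}$ blue and $e_{m_0+1},\dots,e_m$ red. Write $B=\{e_1,\dots,e_{m_0}\}$ and $R=\{e_{m_0+1},\dots,e_m\}$ for the two monochromatic subpaths. The set $Z$ of uncovered vertices has size $k-2\ell$, so there are not enough spare vertices to splice in new edges freely; instead the plan is to \emph{close one of the two subpaths into a cycle}. If we can find an edge $f$ of the appropriate colour joining the two ends $e_1^{-}$ and $e_{m_0}^{+}$ of the blue subpath $B$ (using up the slack $Z$ together with possibly a few vertices borrowed from the red subpath near $e_{m_0}$), then $B\cup\{f\}$ is a monochromatic blue $\ell$-cycle $\C$, and what remains of the red subpath is a monochromatic red $\ell$-path $\P$; symmetrically we may instead close $R$. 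One then checks that $\C$ has at least two edges (if $B$ or $R$ is a single edge we are in a degenerate situation handled separately, possibly producing $\P=\emptyset$), that $\C$ and $\P$ are vertex-disjoint and use different colours, and one reads off the number of covered vertices from how many edges were borrowed when forming $f$ — this is exactly where the two alternatives $n-k+2\ell$ and $n-2k+3\ell$ in item~4, and the value $n-k+\ell$ in item~5, come from.

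More concretely, the key case distinction is on whether the "wrap-around" edge $f=e_{m_0}^{+}\cup Z\cup e_1^{-}$ — which has exactly $\ell+(k-2\ell)+\ell=k$ vertices, hence is a genuine edge of $\K_n^{(k)}$ — is blue or red. If $f$ is blue, then $B\cup\{f\}$ is a blue $\ell$-cycle on $V(B)\cup Z$, and $\P:=R$ is a red $\ell$-path vertex-disjoint from it; here every vertex is covered, giving $|V(\C)|+|V(\P)|=n$, which we must reconcile with item~4 — so in fact one wants $f$ to wrap $e_{m}^{+}$ back to $e_{m_0+1}^{-}$ on the \emph{red} side, or to peel off an edge, to land in the stated range; the bookkeeping is that closing a subpath of length $t$ into a cycle consumes its $2\ell$ "end" vertices plus $Z$, and the complementary path loses the corresponding overlaps. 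If $f$ is red, we cannot directly close $B$; instead we use $f$ together with the red structure: $\{f\}\cup R$ is a red $\ell$-path from $e_1^{-}$ around through $Z$ and then along all of $R$, and now its two endpoints are $e_1^{-}$ and $e_m^{+}$, both of which sit on blue edges of $B$ — so we try to close \emph{this longer red path} into a red cycle by a blue... no: we close it with another red edge through vertices freed up by removing one or two blue edges $e_1,e_2$ from the front of $B$, turning those $k-\ell$ or $2(k-\ell)$ freed vertices into the slack needed, and the remaining blue path $e_3,\dots,e_{m_0}$ becomes $\P$. Removing one blue edge loses $k-\ell$ vertices from the total (case $n-k+2\ell$ is wrong by sign — rather one removes so that the leftover is a path, losing exactly $k-\ell$ from $n-k+2\ell$... ) — the precise arithmetic is routine once the combinatorial move is fixed, and it is designed to yield either $n-k+2\ell$ (remove no extra edge, only re-route) or $n-2k+3\ell$ (remove one extra edge).

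The degenerate sub-cases must be handled at the end: if $m_0=m$ (the whole path is one colour) or if $m_0\le 1$ or $m-m_0\le 1$ (one subpath is a single edge or empty), then one of the two subpaths is too short to be a cycle with $\ge 2$ edges. In that event one instead closes the long monochromatic subpath into a cycle $\C$ (now using \emph{all} of $Z$ and possibly the one short edge of the other colour, re-coloured via a suitable replacement edge, or simply absorbed), and sets $\P=\emptyset$; then $|V(\C)|=n-(k-\ell)$ because closing an $\ell$-path into an $\ell$-cycle of the same edge set identifies its two length-$\ell$ ends, reducing the vertex count by $k-\ell$ relative to covering everything — this is item~5. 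One also has to rule out, using maximality of $\P_0$ from Lemma~\ref{lem:RBpath} exactly as in its proof, the possibility that $Z$ is so structured that \emph{no} wrap-around edge of either colour closes a subpath — but since $f$ is a single edge of $\K_n^{(k)}$ it has \emph{some} colour, so that obstruction cannot arise. The main obstacle, and the part requiring genuine care rather than routine computation, is the bookkeeping that simultaneously guarantees item~3 (at least two edges in $\C$) and pins the total number of covered vertices to precisely one of the two allowed values in item~4: we must choose \emph{which} subpath to close and \emph{how many} edges to shed so that both constraints hold in every colour configuration, and verifying that the two shed edges can always be chosen consecutively at an end of a subpath (so that what remains really is an $\ell$-path) in all small-length corner cases is where the proof will spend most of its effort.
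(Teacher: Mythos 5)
Your approach has a genuine gap: you never invoke the crucial secondary extremal choice that the paper makes, namely choosing, among all maximal blue-red $\ell$-paths from Lemma~\ref{lem:RBpath}, one that maximises $\max\{m_0,\,m-m_0\}$, the length of the longest monochromatic subpath. This choice is what makes the paper's case analysis terminate. With that maximality in hand, the paper considers the single edge $e := e_{m_0}^{+}\cup Z\cup e_m^{+}$ (not your $f := e_{m_0}^{+}\cup Z\cup e_1^{-}$): if $e$ is blue, then deleting $e_{m_0+1}$ and inserting $e$ gives a new blue-red $\ell$-path of the same total length but with a strictly longer monochromatic subpath, contradicting the choice of $\P_{\max}$; if $e$ is red, it closes the red subpath into a cycle and, after discarding $e_{m_0}$, the remaining blue subpath is the desired disjoint $\ell$-path. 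Your argument, by contrast, picks an ad hoc closing edge, notices it might be the wrong colour, picks another edge borrowed from the other subpath, and then leaves the case where \emph{that} edge is also the wrong colour unresolved. Your remark that ``since $f$ is a single edge of $\K_n^{(k)}$ it has some colour, so that obstruction cannot arise'' is precisely the point at which the argument fails: having \emph{a} colour is not the same as having the colour you need, and without the secondary maximality there is no contradiction available when the colours are adversarial.

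A second concrete error: when your $f = e_{m_0}^{+}\cup Z\cup e_1^{-}$ is blue, you assert that $\C := B\cup\{f\}$ and $\P := R$ are vertex-disjoint and cover all $n$ vertices. But $B$ and $R$ share the $\ell$ vertices $e_{m_0}^{+} = e_{m_0+1}^{-}$, and $f$ also contains $e_{m_0}^{+}$, so $\C$ and $R$ are not disjoint; you would at least need to discard $e_{m_0+1}$ from $R$. Finally, the all-one-colour case $m_0=m$, which in the paper is a separate and nontrivial branch (using the pair of edges $e_1^{+}\cup\mathring{e}_1\cup e_m^{+}$ and $e_1^{+}\cup Z\cup e_m^{+}$ that either close the cycle or together form a two-edge cycle of the opposite colour), is only gestured at in your last paragraph and not actually handled.
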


\begin{proof}
 By Lemma \ref{lem:RBpath}, there is a blue-red $\ell$-path  with edges $e_i$ for $i \in [m]$ and turning point $m_0$ that covers all but a set $Z$ of $k-2\ell$ vertices of~$\K_n^{(k)}$. 
 Among such $\ell$-paths, choose $\P_{\max}$ such that $\max\{m_0, m-m_0\}$ is as large as possible (i.e.~$\P_{\max}$ maximises the length of a monochromatic sub-$\ell$-path).
 By symmetry, we can assume that $\max\{m_0, m-m_0\}=m_0$, that $\P_B = \{e_i : i \in [m_0]\}$ is blue and that $\P_R = \{ e_i : i \in [m] \setminus [m_0] \}$ is  red. 
 Since $n\geq 3(k-\ell)$, we know that $m\geq 2$.
 If $m_0 < m$ and the edge $$e := e_{m_0}^{+} \cup  Z \cup e_{m}^{+}$$ is blue, then $\big(\P_{\max} \setminus e_{m_0+1}\big) \cup \{e\}$ is a blue-red $\ell$-path contradicting the choice of $\P_{\max}$. If $m_0 < m$ and the edge $e$ is red, then the $\ell$-cycle $\C_R = \P_R \cup e$ together with the $\ell$-path $\P_B \setminus \{e_{m_0}\}$ are as desired.

 So we can assume that $m_0=m$, that is, $\mathcal P_{\max}$ is all blue. If also one of the two edges $e_1^+\cup \mathring{e_1}\cup e_m^+$, $e_1^+\cup Z\cup e_m^+$ is blue, we can close $\P_{\max}$, forming an $\ell$-cycle that covers all but $e_1^-\cup Z$, or all but $e_1^-\cup\mathring{e_1}$, respectively, which is as desired. So we can suppose both edges $e_1^+\cup \mathring{e_1}\cup e_m^+$, $e_1^+\cup Z\cup e_m^+$ are red. They form an $\ell$-cycle with two edges, which together with $\P_{\max}\setminus \{e_1, e_2,  e_m\}$ covers all but $e_1^-\cup\mathring{e_2}\cup\mathring{e_m}$ (note that possibly $m=2$, in which case $e_2$  coincides with $e_m$).
 So, we found an $\ell$-cycle and an $\ell$-path which are as required (in particular, either they cover $n-2k+3\ell$  vertices, or the $\ell$-path is empty and the $\ell$-cycle covers  $n-k+\ell$ vertices).
\end{proof}

\section{Proof of Theorem~\ref{thm:RB-ell-cycles} (a)}\label{sec:k/2}

This section is devoted to the proof of Theorem~\ref{thm:RB-ell-cycles} (a).

Consider a monochromatic $\ell$-cycle $\C_B$ with at least two edges and a disjoint monochromatic $\ell$-path $\P_R$ as given by Lemma~\ref{lem:cycle+path}.
Note that if $\P_R$ has at most two edges, we are done, so assume otherwise. By deleting at most two edges from $\P_R$, if necessary, we can assume that \begin{equation*}\label{cover}
|V(\C_B \cup \P_R)|=n-3k+4\ell.
\end{equation*}

Among all such choices for $\C_B$ and $\P_R$ (including those where $\P_R$ is empty), assume we chose $\C_B$ and $\P_R$ such that

\begin{equation}\label{as:maxlength}
 \text{$\C_B$ has as many edges as possible.}
\end{equation}

By symmetry, we may assume that $\C_B$ is blue and $\P_R$ is red.
Say $\C_B$ has  edges $e_i, i \in [m_c]$ (and thus length $m_c\geq 2$), while $\P_R$  has edges $f_j, j \in [m_p]$ (and thus length $m_p\geq 0$).

Assuming that Theorem~\ref{thm:RB-ell-cycles} (a) does not hold, we will reach a contradiction by analysing the connections from the first/last edge of $\P_R$ to $\C_B$. If these cannot be used to close up $\P_R$ to an $\ell$-cycle, we find a red $\ell$-cycle on the same vertices as $\C_B$. In a last step, we will use this new red $\ell$-cycle together with $\P_R$ to form one large red $\ell$-cycle.

\smallskip 

We start by making a couple of easy observations. First of all, note that
\begin{equation}\label{mpgeq3}
 m_p \geq 2,
\end{equation}
 as otherwise $\C_B$ covers all but at most $4k-4\ell$ vertices.

Let $Z_1,Z_2,Z_3$ be mutually disjoint subsets of vertices not covered by $\C_B \cup \P_R$ such that $|Z_1| = |Z_2| = |Z_3| =  k-2\ell$. Consider the edges $$w^t := f_1^{-} \cup Z_t \cup f_{m_p}^{+},$$ for $t=1,2,3$. If any of the edges $w^t$ is red, then $\C_B$ together with $\P_R \cup \{w^t\}$ are $\ell$-cycles as in the theorem, covering all but $2k-2\ell$ vertices. So, 

\begin{equation}\label{as:w1w2}
 \text{$w^t := f_1^{-} \cup Z_t \cup f_{m_p}^{+}$ \ is blue, for $t=1,2,3$.}
\end{equation}

Consider the edges 
$$v^t_i := f_{m_p}^{+} \cup Z_t \cup e_i^-$$ for $i\in [m_c]$ and $t,t'\in\{1,2,3\}$. If for some triple $i,t,t'$ with $t\neq t'$, both  edges $v^t_i$, $v^{t'}_{i+1}$ are blue, then the $\ell$-cycle $\{v^t_i, v^{t'}_{i+1} \} \cup (\C_B \setminus \{e_i\})$ together with the $\ell$-path $\P_R \setminus \{f_{m_p}\}$ contradicts~\eqref{as:maxlength}. So,
for each $i\in [m_c]$, and each pair and $t,t'\in\{1,2,3\}$ with $t\neq t'$, 
\begin{equation}\label{vi}
 \text{one of the edges $v^t_i$, $v^{t'}_{i+1}$ is red.}
\end{equation}

Similary, for each $i\in [m_c]$, and each pair and $t,t'\in\{1,2,3\}$ with $t\neq t'$, setting $$u^t_i :=  f_1^{-} \cup Z_t \cup e_i^-,$$
we observe that
\begin{equation}\label{ui}
 \text{one of the edges $u^{t}_i, u^{t'}_{i+1}$ is red.}
\end{equation}

We now establish that our $\ell$-cycle is a bit longer than our $\ell$-path.

\begin{cla}\label{cla:longcycle}
 It holds that $m_c \geq m_p + 2$.
\end{cla}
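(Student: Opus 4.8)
The plan is to argue by contradiction: suppose $m_c \leq m_p + 1$. Since we already know $m_p \geq 2$ by \eqref{mpgeq3}, this means the blue cycle $\C_B$ is not much longer than the red path $\P_R$; in fact $|V(\C_B)| = m_c(k-\ell) \leq (m_p+1)(k-\ell)$, which is comparable to $|V(\P_R)| + (k-\ell) = (m_p+1)(k-\ell)$. First I would set up the counting: the total covered is $|V(\C_B\cup\P_R)| = n - 3k+4\ell$, so the number of uncovered vertices is $3k-4\ell = 3(k-\ell) - \ell \geq 3(k-\ell) - (k-\ell)/2$, leaving plenty of room for the disjoint sets $Z_1,Z_2,Z_3$ of size $k-2\ell$ each that the proof has already fixed (note $3(k-2\ell) = 3k-6\ell \leq 3k-4\ell$, so they fit).

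The heart of the argument should be to exploit \eqref{vi} and \eqref{ui} together with \eqref{as:w1w2}. The idea: the edges $v_i^t = f_{m_p}^+ \cup Z_t \cup e_i^-$ and $u_i^t = f_1^- \cup Z_t \cup e_i^-$ connect the two endpoints of $\P_R$ to each vertex-block $e_i^-$ of $\C_B$. If we could make $\P_R$ into an $\ell$-cycle by routing it through $\C_B$ — using a blue connector at one end and a blue connector at the other, splicing in the right portion of $\C_B$ — we would get a long blue $\ell$-cycle together with a leftover red $\ell$-path, and if that blue cycle is longer than $m_c$ we contradict \eqref{as:maxlength}. Concretely, I would try: pick indices $i < j$ in $[m_c]$ and look at using $u^t_{i+1}$ (blue) to enter $\C_B$ at $e_{i+1}^-$, traverse $e_{i+1},\dots,e_j$ of $\C_B$, and leave via $v^{t'}_{j+1}$ (blue) — wait, that gives a blue cycle containing $\P_B$? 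No: $\P_R$ is red, so to use $\P_R$ inside a cycle that cycle must be red; instead the relevant splice uses the red path $\P_R$, a red connector from one endpoint, a red arc of... but $\C_B$ is blue. So the right move is the dual: \eqref{vi} and \eqref{ui} force many of the $v^t_i, u^t_i$ to be \emph{red}, and with enough red connectors to $\C_B$ one builds a red $\ell$-cycle on a vertex set strictly larger than $V(\P_R)$, namely $V(\P_R)$ plus a good chunk of $V(\C_B)$ together with some $Z_t$'s; comparing lengths with the maximal blue cycle $\C_B$ then yields the inequality $m_c \geq m_p+2$, because if $\C_B$ were too short this red object (paired with the complementary blue $\ell$-path inside $\C_B$) would violate \eqref{as:maxlength} with its roles as (cycle, path) — here one must be careful that \eqref{as:maxlength} was stated for the configuration (blue cycle, red path), so I would instead derive a contradiction by producing a valid (red cycle, blue path) pair where the red cycle has more edges than $m_c$, which is symmetric to the maximality assumption.

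More carefully, here is the mechanism I expect to work. By \eqref{vi}, along the cyclic sequence $e_1,\dots,e_{m_c}$, for each consecutive pair and each $t\ne t'$ at least one of $v^t_i, v^{t'}_{i+1}$ is red; a short pigeonhole shows that either there is some fixed $t$ with $v^t_i$ red for two cyclically consecutive indices $i,i+1$, or the red $v^t_i$'s alternate in a controlled way — in any case we can find $i$ and distinct $t_1,t_2$ so that $v^{t_1}_i$ and $v^{t_2}_{i+1}$ are both red. Symmetrically from \eqref{ui} we find $j$ and distinct $s_1,s_2$ with $u^{s_1}_j, u^{s_2}_{j+1}$ both red. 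Choosing the index sets and the $Z_t$'s to be disjoint (possible since we have three $Z$'s and only need a couple of distinct labels at each end, plus we may need to enlarge to more $Z$-sets if the two ends must avoid each other — this is where the bound $3k-4\ell$ uncovered vertices and the freedom to pick $Z$-sets matters, and if three is not enough one reduces $\P_R$ slightly or re-picks), we splice: red path $\P_R$, then red connector from $f_{m_p}^+$ through $Z_{t_1}$ into $e_i^-$, then the blue arc $e_i, \dots$ — no. The blue arc is the problem again. So in fact the correct construction takes the red connectors from \emph{both} endpoints of $\P_R$ into the \emph{same} block, or into blocks $e_i^-, e_{i+1}^-$ adjacent on $\C_B$: using $v^{t_1}_i$ (red) and $u^{s_1}_{i+1}$ (red) we form the red $\ell$-cycle $\P_R \cup \{v^{t_1}_i, e_i', u^{s_1}_{i+1}\}$ where $e_i'$ is a single edge through $e_i^-$ and $e_{i+1}^-$ and some new vertices — this cycle has $m_p + 3$ or so edges, uses only $O(k-\ell)$ vertices of $\C_B$, and leaves $\C_B$ minus a couple of blocks as a blue $\ell$-path. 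The main obstacle, which I would treat as the crux of the write-up, is the bookkeeping of exactly which small vertex sets ($Z_t$'s, the $\mathring{e}$ parts, the endpoint blocks) get reused where, so that the new red cycle is genuinely vertex-disjoint from the new blue path and the new blue path is long enough (i.e.\ has enough edges that we contradict the symmetric version of \eqref{as:maxlength}); getting the arithmetic of the covered-vertex count to land in the required range is the part most likely to need the precise hypotheses $\ell \le k/2$ and $(k-\ell)\mid n$.
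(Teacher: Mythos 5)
There is a genuine gap, and it sits at exactly the point your proposal hand-waves past. Your plan is to exhibit two \emph{red} connectors from the ends of $\P_R$ into $\C_B$ and splice them into a red $\ell$-cycle longer than $m_c$. That is indeed one branch of the paper's argument, but it does not always terminate: if the relevant connectors happen to be blue, you cannot force them red. The paper's proof handles this by a three-step case analysis. After normalising so that $u^1_1$ is red (allowed by~\eqref{ui}), either $v^2_1$ is also red — then $\P_R\cup\{u^1_1,v^2_1\}$ is a red $\ell$-cycle of length $m_p+2>m_c$ into the single block $e_1^-$, with $\C_B\setminus\{e_1,e_{m_c}\}$ as the leftover blue path — or $v^2_1$ is blue, whence $v^3_2$ is red by~\eqref{vi}. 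Then either $u^1_2$ is red (same construction at $e_2^-$), or $u^1_2$ is blue, and here is the step you discarded prematurely: with $v^2_1$, $u^1_2$, and $w^3$ all blue, the blue $\ell$-cycle $\{v^2_1,w^3,u^1_2\}\cup(\C_B\setminus\{e_1\})$ has $m_c+2$ edges and is disjoint from the shorter red path $\P_R\setminus\{f_1,f_{m_p}\}$, contradicting~\eqref{as:maxlength}. You dismissed a blue-cycle route on the grounds that ``$\P_R$ is red, so it cannot lie inside a blue cycle'' — but the blue cycle does \emph{not} contain $\P_R$; it only uses the two terminal blocks $f_1^-$ and $f_{m_p}^+$ together with the bridge $w^3$, and the red path left behind is $\P_R$ with its two end edges deleted. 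Without this branch, your argument simply halts whenever the red connectors refuse to appear.

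Two further problems. First, your ``short pigeonhole'' claim — that \eqref{vi} alone forces an index $i$ and distinct $t_1,t_2$ with $v^{t_1}_i$ and $v^{t_2}_{i+1}$ both red — is not established, and in fact \eqref{vi} admits colourings with no such pair (e.g.\ the alternating pattern where every $v^t_i$ is red for $i$ odd and blue for $i$ even satisfies \eqref{vi} but has no red pair on consecutive indices). The paper avoids this by pairing a $u$-connector with a $v$-connector \emph{into the same block} $e_i^-$, not two $v$-connectors into consecutive blocks; this requires no extra gluing edge. Second, the auxiliary edge $e'_i$ ``through $e_i^-$ and $e_{i+1}^-$ and some new vertices'' that you introduce is not among the defined edges, there are no spare uncovered vertices to build it from, and nothing controls its colour; it is an artefact of routing into two adjacent blocks rather than one. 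The correct mechanism — both connectors to one block, falling back to a longer blue cycle via $w^t$ when that fails — removes the need for it entirely.
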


\begin{proof}
 Suppose to the contrary that $m_c < m_p+2$. By~\eqref{ui}, we can assume the edge $u^1_1$ is red. 
 If the edge $v^2_1$ is red, too, then $\P_R \cup \{u^1_1, v^2_1\}$ and $\C_B \setminus \{e_1, e_{m_c}\}$  contradict the choice of $\C_B$ and $\P_R$ fulfilling~\eqref{as:maxlength}. So the edge  $v^2_1$ is blue, and thus by~\eqref{vi}, the edge $v_2^3$ is red.

 Now, if the edge $u^1_2$ is red, then $\P_R \cup \{v_2^3, u^1_2\}$ is a red $\ell$-cycle of length greater than $m_c$, which together with the path  $\C_B\setminus\{e_1, e_2\}$ contradicts~\eqref{as:maxlength}. Therefore,  $u^1_2$ is blue.
 But now, since by~\eqref{as:w1w2}, the edge $w^3$ is blue, we found a blue $\ell$-cycle, namely $\{v^2_1, w^3, u^1_2\} \cup (\C_B \setminus \{e_1\} )$, which together with the red $\ell$-path $\P_R \setminus \{f_1,f_{m_p} \} $ contradicts~\eqref{as:maxlength}. 
\end{proof}

Note that Claim~\ref{cla:longcycle} together with~\eqref{mpgeq3} implies that
\begin{equation}\label{mcgeq9}
m_c \geq 4.
\end{equation}

Let us now consider the edges $$g_i := (e_i \setminus e_i^+) \cup e_{i+2}^{+} \quad \mbox{and} \quad  h_i := (e_i \setminus e_i^+) \cup e_{i+3}^{+},$$ for $i \in [m_c]$ (considering all indices modulo $m_c$). The advantage of these edges is that on the one hand, each of these edges, if blue, provides a shortcut on $\C_B$ (and the vertices left out of $\C_B$ can be used for closing up $\P_R$). On the other hand, if all these edges are red, then they form new red $\ell$-cycles on the vertex set of $\C_B$.

 Let us  first show why any of the edges $g_i$, $h_i$ would be useful in blue.

\begin{cla}\label{cla:C_R}
The edges $g_i$  are red for all $i \in [m_c]$, and if $m_c>4$, then the edges $h_i$  are red for all $i \in [m_c]$.
\end{cla}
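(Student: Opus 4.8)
The plan is to show that if any of the edges $g_i$ (or $h_i$, when $m_c>4$) is blue, we can build a new blue $\ell$-cycle together with a red $\ell$-path whose configuration either contradicts \eqref{as:maxlength} or directly yields a pair of monochromatic $\ell$-cycles covering all but at most $4(k-\ell)$ vertices. Let me sketch the $g_i$ case first. Suppose $g_i = (e_i\setminus e_i^+)\cup e_{i+2}^+$ is blue for some $i$. Then $(\C_B\setminus\{e_{i+1},e_{i+2}\})\cup\{g_i\}$ is a blue $\ell$-cycle on the vertex set $V(\C_B)\setminus e_{i+1}^+$; note this cycle has $m_c-1$ edges, so it is \emph{shorter} than $\C_B$, and hence cannot by itself contradict \eqref{as:maxlength}. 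The point is that the $k-\ell$ freshly-released vertices of $e_{i+1}^+$ (together with the vertices of $Z$ not covered by $\C_B\cup\P_R$) give us new material to \emph{extend} the red path $\P_R$. Concretely, we want to attach $e_{i+1}^+$ to one end of $\P_R$ via a red edge of the form $f_{m_p}^+\cup Z_t\cup(\text{something inside }e_{i+1}^+)$; but whether such an edge is red is exactly what \eqref{vi}-type observations control. So the real content is: combine the existence of a blue $g_i$ with the parity/colour constraints \eqref{as:w1w2}, \eqref{vi}, \eqref{ui} to force, in every case, a longer red $\ell$-cycle (contradicting \eqref{as:maxlength}, since by Claim~\ref{cla:longcycle} the red path is already within $2$ edges of the cycle length, and we will have increased the red structure) or an outright good pair.

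More precisely, I would argue as follows for a fixed $i$ with $g_i$ blue. First use \eqref{ui} and \eqref{vi} with the index $i$ and two distinct colours of the $Z_t$'s to obtain one red edge of the form $u^t_{\cdot}$ or $v^t_{\cdot}$ anchored at an endpoint of the blue cycle $\C_B'=(\C_B\setminus\{e_{i+1},e_{i+2}\})\cup\{g_i\}$ — the endpoints of $\C_B'$, as far as connecting to $\P_R$ goes, are $e_i^-$-side and $e_{i+3}^-$-side, which are among the $e_j^-$ already handled by \eqref{vi}/\eqref{ui}. If we can get red edges on \emph{both} ends, we splice $\P_R$ into $\C_B'$ to make a single red $\ell$-cycle of length $\geq m_p + m_c - 1 \geq m_p + (m_p+2) - 1 = 2m_p+1 > m_c$ (using Claim~\ref{cla:longcycle}), and then the leftover blue material becomes a short blue path/cycle — contradicting \eqref{as:maxlength} by the usual "longest monochromatic substructure" comparison. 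If one of the ends only admits a blue connecting edge, then that blue edge plus $w^t$ (blue by \eqref{as:w1w2}) plus a blue edge at the other end closes up a \emph{long} blue $\ell$-cycle incorporating part of $\P_R$, again contradicting \eqref{as:maxlength}. The $h_i$ case for $m_c>4$ is entirely analogous: $(\C_B\setminus\{e_{i+1},e_{i+2},e_{i+3}\})\cup\{h_i\}$ is a blue $\ell$-cycle of length $m_c-2\geq 3$ on $V(\C_B)\setminus(e_{i+1}^+\cup e_{i+2}^+)$, releasing $2(k-\ell)$ vertices, with the same case analysis; the hypothesis $m_c>4$ is exactly what guarantees the shortened cycle still has at least $3$ edges (so it is a legitimate $\ell$-cycle with $\geq 2$ edges, as demanded throughout).

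The main obstacle I anticipate is bookkeeping the endpoints: after shortcutting with $g_i$ or $h_i$, the two "ends" of the blue cycle that must connect to $\P_R$ are $e_i^-$ and $e_{i+3}^-$ (resp.\ $e_{i+4}^-$), and I need the observations \eqref{vi} and \eqref{ui}, originally stated for consecutive pairs $e_j^-, e_{j+1}^-$ along $\C_B$, to deliver a red or a controlled-colour edge at \emph{each} of these specific ends simultaneously. This requires invoking \eqref{vi}/\eqref{ui} at the right indices with the right choice of two distinct subsets among $Z_1,Z_2,Z_3$, and being careful that the $Z_t$ used to extend the path and the $Z_{t'}$ used inside the shortcut/closure are disjoint — which is why three sets $Z_1,Z_2,Z_3$ (rather than two) were set up. A secondary subtlety is the wrap-around: indices are taken modulo $m_c$, so when $i+2$ or $i+3$ exceeds $m_c$ the "released" edges $e_{i+1},e_{i+2}$ might overlap edges already named $e_1$ or $e_2$; but since $m_c\geq 4$ (resp.\ $>4$) by \eqref{mcgeq9} and Claim~\ref{cla:longcycle}, the shortened cycle still has enough edges and the argument goes through uniformly. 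I do not expect any of the colour case-splits themselves to be hard — each branch terminates quickly in a contradiction to \eqref{as:maxlength} or a direct win — the care is entirely in the index and subset management.
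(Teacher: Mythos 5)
Your high-level strategy (suppose $g_i$ or $h_i$ is blue, shortcut $\C_B$, use the freed vertices, and split into cases on the colours of $u^t$ and $v^t$ edges) is aligned with the paper, but the details contain genuine errors that would sink the argument.

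First, the construction of the shortcut cycle is wrong. Since $g_i = (e_i\setminus e_i^+)\cup e_{i+2}^+$ shares $k-\ell > \ell$ vertices with $e_i$, the two cannot be consecutive edges of an $\ell$-cycle; you must replace \emph{three} edges $e_i,e_{i+1},e_{i+2}$ (respectively four, for $h_i$). So $\C_B'$ has $m_c-2$ (resp. $m_c-3$) edges and its vertex set misses $e_{i+1}\cup\mathring e_{i+2}$, a set of $2(k-\ell)$ vertices, not the single set $e_{i+1}^+$ of size $k-\ell$ that you assert. Your vertex count and edge count do not even agree internally ($m_c-1$ edges would give $|V(\C_B)|-(k-\ell)$ vertices, not $|V(\C_B)|-\ell$).

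Second, and more seriously, "splice $\P_R$ into $\C_B'$ to make a single red $\ell$-cycle" is not a coherent move: $\C_B'$ is blue. The paper instead closes $\P_R$ on itself, disjointly from $\C_B'$, by routing its two ends through two of the $Z_t$ and meeting in the \emph{freed} interior set $e_{i+1}^-$, i.e.\ via the edges $u^1_{i+1}$ and $v^2_{i+1}$. Your proposal attaches at $e_i^-$ or $e_{i+3}^-$, which are precisely the points still covered by $\C_B'$, so any such red cycle would collide with the blue cycle. Moreover, when both connector edges are red this does \emph{not} give a maximality contradiction; it directly yields two disjoint monochromatic cycles covering all but $3(k-\ell)$ (resp.\ $4(k-\ell)$) vertices, i.e.\ the theorem itself. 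The maximality contradiction arises only in the fallback branch (one of the connectors is blue): there one shows, via \eqref{ui}, \eqref{vi} and the blueness of $w^2$, that $\{u^1_{i+1}, w^2, v^3_{i+2}\}\cup(\C_B\setminus\{e_{i+1}\})$ is a blue $\ell$-cycle with $m_c+2$ edges on the same total vertex count, contradicting \eqref{as:maxlength}. Note that this fallback uses the \emph{original} $\C_B$, not the shortcut $\C_B'$. As written, your proof does not locate the correct attachment points, does not keep the two prospective cycles disjoint, and conflates "win the theorem" with "contradict maximality," so it would not go through without being rebuilt along these lines.
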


\begin{proof}
 Suppose that one of these edges $g_i$ or $h_i$ is blue (the latter only in the case that $m_c>4$).
 Then there is a blue cycle $\C_B'$ obtained from $\C_B$ by replacing the edges $e_i,e_{i+1},e_{i+2}$ with the edge $g_i$, or by replacing the edges $e_1,e_{i+1},e_{i+2}, e_{i+3}$ with the edge $h_i$.
 
 Consider the edges
 $u^1_{i+1}$ and $v^2_{i+1}$.
If both of these edges are red then the theorem holds, since $\C_B'$ together with $\P_R \cup \{u^1_{i+1}, v^2_{i+1}\}$ either covers all but $3k-3\ell$ vertices (if $g_i$ is blue); or cover all but $4k- 4\ell$ vertices (if $h_i$ is blue). So by symmetry, we can  assume that  $u^1_{i+1}$ is blue. Similarly, if the edges
$u^2_{i+2}$ and $v^3_{i+2}$ are both red then the theorem holds, so at least one of  them is blue.

Since $u^1_{i+1}$ is blue,~\eqref{vi} implies that $u^2_{i+2}$ is red, and thus $v^3_{i+2}$ is blue. Recall that by~\eqref{as:w1w2}, the edge $w^2$ is blue, too, and so,  the $\ell$-cycle $\{u^1_{i+1}, w^2, v^3_{i+2}\} \cup (\C_B \setminus \{e_{i+1}\})$ together with the $\ell$-path $\P_R \setminus \{f_1,f_{m_p}\}$ contradicts~\eqref{as:maxlength}.  
\end{proof}

Finally, consider the edge sets $$R_j := \{g_i: i \equiv j \pmod{3} \},$$ for $j = 0,1,2$. Notice that $R_0, R_1, R_2$ are three $\ell$-cycles of length $\frac{m_c}3$ if $m_c\equiv 0 \pmod{3}$, and together form one $\ell$-cycle otherwise. 

The remainder of the proof is split into several cases, depending on the value of $m_c$, and which of the edges $u^t_i$, $v^t_i$ are red. Note that by~\eqref{ui} (and after possibly renaming the edges on $\C_B$, or the sets $Z_t$), we may assume that $u_1^1$ is red.
Moreover, by~\eqref{vi}, at least one of the edges  $v_4^2$, $v_5^3$ is red (if $m_c=4$, we take indices modulo $4$, meaning that one of $v_4^2$, $v_1^3$ is red).

In all the cases considered below we use that $m_c\geq 4$ by~\eqref{mcgeq9}.

\begin{case}
	$m_c \not\equiv 0 \pmod{3}$ and $v_4^2$ is red.
\end{case}
In this case, consider the red $\ell$-cycle formed by $R_0\cup R_1\cup R_2$. We can substitute the edge $g_1$ from this $\ell$-cycle with the path $\{u^1_1\}\cup\P_R\cup \{v_4^2\}$ to obtain one red $\ell$-cycle which covers all but $2(k-\ell)$ vertices.

\begin{case}
	$m_c \equiv 0 \pmod{3}$ and $v_4^2$ is red.
\end{case}
Consider the auxiliary red $\ell$-cycle formed by
\begin{align*}
\{h_4\} \cup (R_2 \setminus \{g_2,g_5\}) &\cup \{h_2\} \cup (R_0 \setminus \{g_3\})\\
	&\cup \{h_3\} \cup (R_1 \setminus \{g_4\}).
\end{align*}
Similar as in the previous case, we can substitute the edge $g_1$ from the auxiliary $\ell$-cycle with the path $\{u^1_1\}\cup\P_R\cup \{v_4^2\}$ to obtain one red $\ell$-cycle which covers all but $3(k-\ell)$ vertices. (Note that this works fine even if $m_c=6$.)

\begin{case}
	$m_c \not\equiv 1 \pmod{3}$ and $v_5^3$ is red.
\end{case}
Consider the auxiliary red $\ell$-cycle formed by
\begin{align*}
\{h_1\} \cup (R_2 \setminus \{g_2\}) &\cup \{h_2\} \cup (R_0 \setminus \{g_3\})\\
&\cup \{h_3\} \cup (R_1 \setminus \{g_1,g_4\})
\end{align*}
and substitute its edge $h_1$  with $\{u^1_1\}\cup\P_R\cup \{v_5^3\}$ to obtain one red $\ell$-cycle which covers all but $3(k-\ell)$ vertices. (Note that this works even if $m_c=5$.)

\begin{case}
	$m_c \equiv 1 \pmod{3}$, $m_c\neq 4$, and $v_5^3$ is red.
\end{case}
If $m_c$ is odd, then we can use the cycle spanned by all edges $h_i$ except $h_1$, and the path $\P_R$, together with edges $u^1_1$ and $v_5^3$. This $\ell$-cycle covers all but $2(k-\ell)$ vertices. Otherwise, since $m_c>4$ and $m_c \equiv 1 \pmod{3}$, we know that $m_c\geq 10$. So we can use a similar approach as above, using five edges $h_i$ instead of two. 
More precisely, the red $\ell$-path formed by
\begin{align*}
\{h_5\} \cup (R_0 \setminus \{g_3,g_6\}) &\cup \{h_2, h_6\} \cup (R_1 \setminus \{g_1, g_4, g_7\})\\
&\cup \{h_3,h_7\} \cup (R_2 \setminus \{g_2,g_5,g_8\})
\end{align*}
and $\{u^1_1\}\cup\P_R\cup \{v_5^3\}$ covers all but $4(k-\ell)$ vertices.

\begin{case}
	$m_c= 4$, and $v_1^3$ is red while $v_4^2$ is blue.
\end{case}
Then by~\eqref{vi}, the edge $v_3^1$ is red, and we can  close $\P_R$ using the edges $v^3_1$, $v_3^1$, $g_1$, $g_4$. We covered all but $3(k-\ell)$ vertices.

This finishes the proof of the theorem.

\section{Proof of Theorem~\ref{thm:RB-ell-cycles} (b)}

In this section we give a  proof of Theorem~\ref{thm:RB-ell-cycles} (b), the first part of which follows very much  the lines of the proof of Theorem~\ref{thm:RB-ell-cycles} (a), while the last part is a bit different. In order to avoid repetition, we only sketch the first part, but give all details for the last part. We remark that much of the work can be avoided if we are only interested in two $\ell$-cycles covering all but $3(k-\ell)$ vertices instead of the  output of Theorem~\ref{thm:RB-ell-cycles} (b).

For the first part of the proof,
the main difference is that now, we use
 Lemma~\ref{lem:cycle+path} to find a monochromatic $\ell$-cycle $\C_B$ of length $m_c\geq 2$ with edges $e_i, i \in [m_c]$, and a disjoint monochromatic $\ell$-path $\P_R$  of length $m_p\geq 0$ with edges $f_j, j \in [m_p]$ such that
\begin{equation}\label{cover_k/3_}
|V(\C_B \cup \P_R)|=n-2k+3\ell,
\end{equation}
choosing $\C_B$ maximal under the these conditions.
That is, now we leave only $2(k-\ell)+\ell$ vertices uncovered in the beginning.
Instead of defining $Z_1, Z_2, Z_3$, we only define $Z_1, Z_2$  as two disjoint sets of vertices not covered by $\C_B \cup \P_R$ with $|Z_1| = |Z_2| =  k-2\ell$. The idea is that now, consecutive edges on $\C_B$ only intersect in at most $k/3$ vertices, which means that the interior of such an edge can be used in the same way as one of the sets $Z_t$. With this we can overcome the difficulty due to having only two sets $Z_t$ to operate with.

Again we easily show that $m_p \geq 2$ (using~\eqref{cover_k/3_}), and that edges $w^t$ defined for $t=1,2$ and $i\in [m_c]$, have the same properties as in the proof of Theorem~\ref{thm:RB-ell-cycles} (a). Now we define $v^t_i$ and $u^t_i$ as in that proof for $t=1,2$, and set $$v^3_i:= f_{m_p}^{+} \cup (e^i\setminus  e_i^+)\text{\ and \ }u^3_i:= f_{1}^{-} \cup (e^i\setminus  e_i^+).$$
It is easy to see that for each $i\in [m_c]$, and each pair and $t,t'\in\{1,2,3\}$ with $t\neq t'$ and $t'\neq 3$, at least one of the edges $v^t_i$, $v^{t'}_{i+1}$ is red and at least one of the edges $u^t_i$, $u^{t'}_{i+1}$ is red.

For showing that $m_c \geq m_p + 2$ (and thus  $m_c \geq 4$), observe that in the proof of Claim~\ref{cla:longcycle} in the proof of Theorem~\ref{thm:RB-ell-cycles} (a), there is only one time where we need that all three sets $Z_t$ are present, and that is at the very end, when we form the blue $\ell$-cycle $\{v^2_1, w^3, u^1_2\}\cup (\C_B \setminus \{e_1\})$. Instead, we can use the $\ell$-cycle $\{v^3_1, w^2, u^1_2\} \cup (\C_B \setminus \{e_1\})$.

For the rest of the proof one might define the edges $g_i$, $h_i$ as in the proof of Theorem~\ref{thm:RB-ell-cycles} (a), show they are red, and then go through Cases 1-5.\footnote{To see this goes through we remark that 
first, near the end of the proof of Claim~\ref {cla:C_R} of that theorem we  used to occupy the set $Z_3$, by employing the edge $v_{i+2}^3$. With the new definition of this edge, this works here too.\\
Second, when going through Cases 1-5, we cannot use the edges $u_1^1$, $v^2_4$, $v^3_5$ as before. This problem is easily overcome by first finding out which of $v^2_4$, $v^1_5$ is red. Say this is $v^1_5$ (otherwise rename all edges). Now, if the edge $u_2^2$ is red, then we are in the same situation as in Case 1,  2 or  5 of the proof of Theorem~\ref{thm:RB-ell-cycles} (a) (with indices augmented by one).
Otherwise, the edge $u_2^2$ is blue, and thus the edge $u_1^3$ is red, as  otherwise we could augment $\C_B$ using these two edges, and destroying one edge of~$\P_R$. Now we are in a situation that is very similar to Cases 3 and~4 of Theorem~\ref{thm:RB-ell-cycles} (a). As in these cases, neither of the edges $g_1$, $h_1$ was used, we have no problem finding our red $\ell$-cycle using $u_1^3$ instead of $u_1^1$.
}
 However, for establishing that edges $h_i$ are red, we would have to content ourselves with the outcome of two $\ell$-cycles covering all but $3(k-\ell)$ vertices. We can do a slightly better than that by arguing as follows.

Consider the edges $$a := f_1^{-} \cup \mathring{e}_3 \cup  V_{[2\ell] \setminus [\ell]}(e_2)$$ and $$a' := V_{[2\ell]\setminus [\ell]}(f_1) \cup \mathring{e}_4 \cup  V_{[k -\ell] \setminus [k - 2\ell]}(e_5)$$ (note that these edges are symmetric with respect to $e_3 \cap e_4$, as Figure~\ref{fig:1} shows).

\begin{figure}[h]
	\centering
	\begin{tikzpicture}
	\node[inner sep=0pt] (hyp) at (0,-0.4)
	{\includegraphics[width=.6\textwidth]{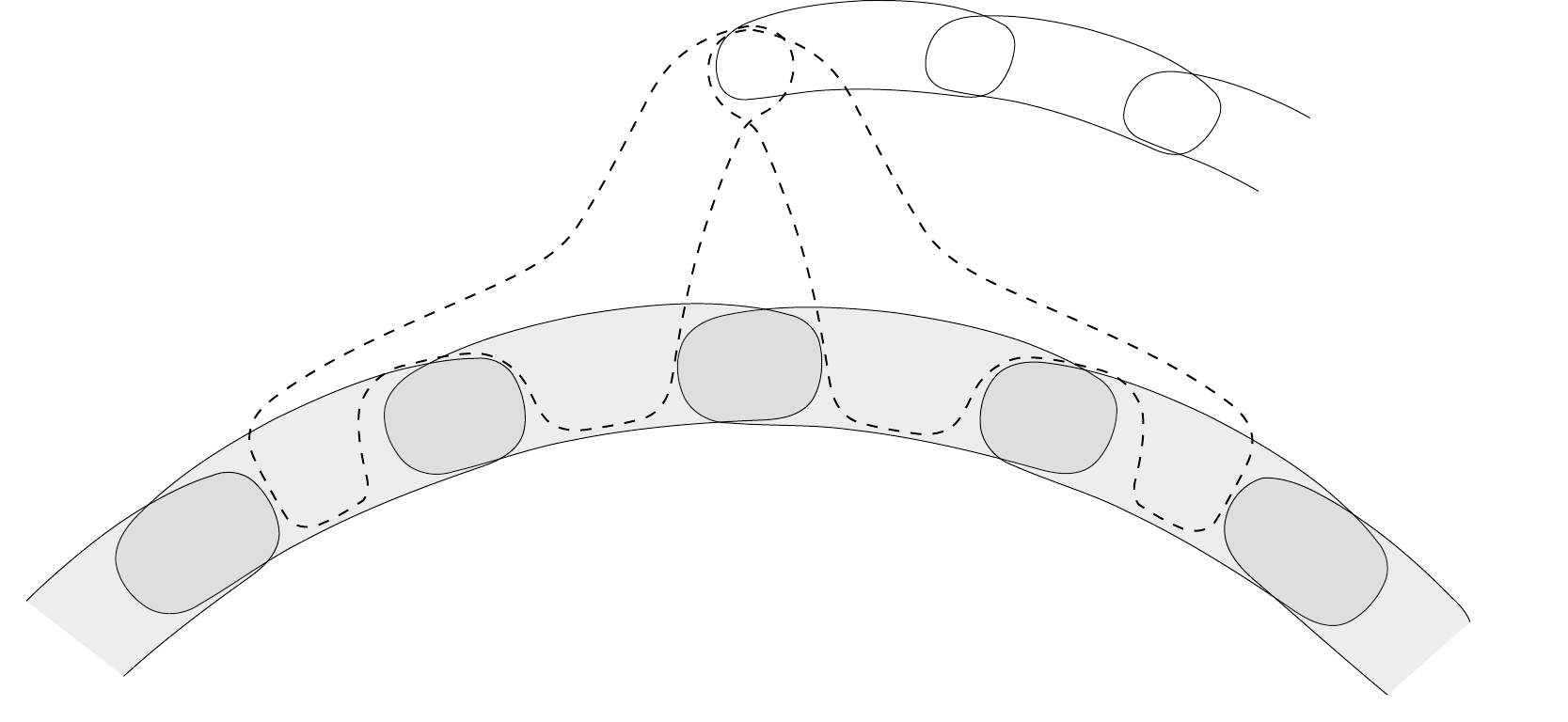}};
	\node (f1) at (0.5,1.1) {$f_1$};
	\node (dots-_p) at (2.8,0.5) {$\ddots$};
	\node (a) at (-0.65,0.15) {$a$};
	\node (a') at (0.35,0.2) {$a'$};
	\node[rotate=75] (dots-_c1) at (-3.8,-2.1) {$\ddots$};
	\node (e_2) at (-2,-1.5) {$e_2$};
	\node (e_3) at (-0.8,-1.1) {$e_3$};
	\node (e_4) at (0.5,-1.1) {$e_4$};
	\node (e_5) at (1.7,-1.5) {$e_5$};
	\node (dots-_c2) at (3.5,-2.1) {$\ddots$};
	\end{tikzpicture}
	\caption{Solid gray and solid white edges are blue and red edges, respectively.}
	\label{fig:1}
\end{figure}

If both $a$ and $a'$ are blue then we can replace the edges $e_3,e_4 \in \C_B$ with the edges $a, a', w^1$ and the blue\footnote{This edge is blue for the same reason for which $w^2$ is blue.} edge $V_{[2\ell]\setminus [\ell](f_1)}\cup Z_2\cup f_{m_p}^+$ to obtain a blue cycle which together with the red path $\P_R \setminus \{f_1,f_m\}$  contradicts the maximality of $\C_B$.
Therefore, we can assume that one of these edges is red, without loss of generality say
\begin{equation}\label{as:g1_k/3}
\text{$a$ is red.}
\end{equation}

 Next, consider the edges $$q_i:= V_{[2\ell] \setminus [\ell]}(e_i) \cup \mathring{e}_{i+1} \cup e_{i+3}^{+},$$ for $i \in [m_c]$. It is easy to see that  the edges $q_i$ form an $\ell$-cycle, which we will call $\C_R$.

\begin{cla}\label{cla:C_R_k/3}
	We may assume that $q_i$ is red, for all $i \in [m_c]$.
\end{cla}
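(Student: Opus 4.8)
The plan is to mimic the proof of Claim~\ref{cla:C_R} from the proof of Theorem~\ref{thm:RB-ell-cycles}~(a), now working with the edges $q_i$ in place of the $h_i$ and exploiting the extra freedom given by $v^3_i$, $u^3_i$ and $a$. Suppose for contradiction that some $q_i$ is blue. Then replacing the four consecutive edges $e_i,e_{i+1},e_{i+2},e_{i+3}$ of $\C_B$ by the single edge $q_i$ yields a blue $\ell$-cycle $\C_B'$ on fewer vertices, leaving out (besides the vertices already uncovered) exactly the $3(k-\ell)$-vertex set $e_i^+\cup\mathring e_{i+2}\cup e_{i+3}^-$ minus what $q_i$ keeps; the point is that $\C_B'$ is strictly shorter than $\C_B$, so it cannot by itself be used to contradict~\eqref{cover_k/3_}, but together with a red path extending $\P_R$ it can contradict the maximality of $\C_B$ once that path is long enough.

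The key step is to find such a red path. As in Claim~\ref{cla:C_R}, I would look at the connector edges from the endpoints of $\P_R$ into the segment of $\C_B'$ near $q_i$, namely the edges $u^t_{j}$ and $v^{t'}_{j'}$ for the indices $j,j'$ sitting just outside the removed block, using $t,t'\in\{1,2,3\}$. If a pair of the form $u^s_j$, $v^{s'}_{j+\text{(something)}}$ (with $s\neq s'$, and at most one of them equal to $3$, so that the first-part observation about one of them being red applies) are both red, then $\P_R$ together with these two connectors and the appropriate arc of $\C_B'$ closes into a red $\ell$-cycle longer than $\C_B$, and this red $\ell$-cycle together with the leftover blue arc forms a cycle/path pair satisfying~\eqref{cover_k/3_} but with a longer cycle — contradiction. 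So by the pairwise-red observation we may assume certain of these connectors are blue; chasing the implications (a blue $u$-edge forces the paired $v$-edge red by the analogue of~\eqref{vi}, etc.) should, exactly as in Claim~\ref{cla:C_R}, leave us with a blue $\ell$-cycle of the shape $\{u^?_{j}, w^{?}, v^{?}_{j'}\}\cup(\C_B'\setminus\{\text{one edge}\})$, where $w^?$ is one of the known-blue $w^t$; this blue $\ell$-cycle plus $\P_R\setminus\{f_1,f_{m_p}\}$ would then contradict the maximality of $\C_B$. The role of $a$ being red (i.e.~\eqref{as:g1_k/3}) and of the edges $v^3_i$, $u^3_i$ is to provide the third "slot'' needed so that we always have an available red connector even though we only have two reservoir sets $Z_1,Z_2$: whenever the argument in~(a) invoked a third set $Z_3$, here we feed in an interior $\mathring e$ of a suitable edge of $\C_B$ not involved in the current block, exactly as was done when proving $m_c\ge m_p+2$ and in Claim~\ref{cla:C_R_k/3}'s predecessor.

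I expect the main obstacle to be bookkeeping rather than a new idea: one must verify that the four-edge block removed for $q_i$ and the edges carrying the third slot are genuinely disjoint from each other and from $e_1,e_2,e_3,e_4$ (which are spoken for by $a$ and by the eventual splicing in Cases~1--5), and that $m_c\geq 4$ — and in the small-$m_c$ cases possibly $m_c\in\{4,5,6\}$ — leaves enough room for all of these simultaneously; the index arithmetic modulo $m_c$ has to be checked in each residue class of $m_c$ mod $3$. A secondary subtlety is that $\C_B'$ here is shorter than $\C_B$ by three edges rather than two (since $q_i$ swallows four edges), so the "contradiction with maximality'' must always be routed through a genuine lengthening of the \emph{red} structure or through a blue $\ell$-cycle on strictly more vertices than $\C_B$ — one cannot simply compare $\C_B'$ with $\C_B$. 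Once these disjointness and length checks are in place, the colour-chasing is a line-by-line copy of Claim~\ref{cla:C_R}, so I would present it tersely, flagging only the points where $v^3$, $u^3$, or~\eqref{as:g1_k/3} replace the missing $Z_3$.
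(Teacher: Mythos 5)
Your sketch gets the broad shape right (remove a block of $\C_B$ to form a shorter blue cycle $\C_B'$, then try to close $\P_R$ by colour-chasing the connector edges $u^t_j$, $v^{t'}_{j'}$), but several concrete details are off and the main contradiction mechanism you propose does not actually fire. First, a bookkeeping error: the edge $q_i = V_{[2\ell]\setminus[\ell]}(e_i)\cup\mathring e_{i+1}\cup e_{i+3}^+$ replaces \emph{three} edges $e_{i+1},e_{i+2},e_{i+3}$, not four — it is the part (b) analogue of $g_i$, not of $h_i$, and $\C_B'$ drops $2(k-\ell)$ vertices, not $3(k-\ell)$. More importantly, the route you lay out is ``always contradict the maximality of $\C_B$ via a longer red cycle or a blue cycle on strictly more vertices.'' That cannot be the whole story: when the colour-chase produces two red connectors $u^1_4,v^2_4$, the resulting red cycle $\P_R\cup\{u^1_4,v^2_4\}$ has only $m_p+2\le m_c$ edges, so it is \emph{not} longer than $\C_B$; and if you additionally try to package $\C_B'$ as a blue path, the vertex count drops below $n-2k+3\ell$, so the pair no longer witnesses a legitimate competitor for~\eqref{cover_k/3_}. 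The paper's proof instead concludes \emph{directly} in these sub-cases: the red cycle $\P_R\cup\{u^1_4,v^2_4\}$ (or $\P_R\cup\{u^3_3,b\}$) together with the blue cycle $\C_B'$ already covers all but at most $2(k-\ell)$ vertices, which is exactly the theorem's bound. This direct count, not a maximality contradiction, is the crux and is missing from your plan.

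Second, the claim that ``the colour-chasing is a line-by-line copy of Claim~\ref{cla:C_R}'' overstates the analogy. The paper explicitly warns that a literal transplant of that argument only gives $3(k-\ell)$ uncovered vertices, and to reach $2(k-\ell)$ it introduces a genuinely new edge $b := f_{m_p}^{+}\cup Z'_2\cup V_{[2\ell]\setminus[\ell]}(e_3)\cup e_3^+$ built from a \emph{subset} $Z'_2\subseteq Z_2$ of size $k-3\ell$, then argues $b$ is red by maximality of $\C_B$ and uses $\P_R\cup\{u^3_3,b\}$ as the red cycle. Nothing of that shape appears in part (a), and your sketch does not anticipate it. Finally, you assign the edge $a$ (and assumption~\eqref{as:g1_k/3}) a role inside the proof of this claim, but $a$ is in fact not used here at all — it only enters after the claim is established, when $\C_R = \{q_i\}$ is spliced together with $\P_R$ to finish the theorem. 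So while the overall spirit (use $\C_B'$; use $u^3,v^3$ as a third slot) is aligned with the paper, the proposal has a wrong replacement count, a contradiction mechanism that fails in the central sub-case, misses the $b$-edge trick, and misattributes the role of $a$.
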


\begin{proof}
	Suppose one of these edges, say $q_1$, is blue.
	Obtain $\C_B'$ from $\C_B$ by replacing the edges $e_2,e_3,e_4$ with the edge $q_1$. 
	
	First assume $u_3^3$ is blue. Then $u^1_4$ is red, by our analogue of~\eqref{ui}.
	Also,~$v^2_4$ is red, as otherwise we can replace $e_3$ with the edges $u_3^3$, $w^1$ and $v^2_4$, and thus contradicting the maximality of $\C_B$.
	But now, $\P_R \cup \{u^1_4,v^2_4\}$ is a red $\ell$-cycle, which, together with the blue $\ell$-cycle $\C_B'$ is as desired for the theorem.
	
	So from now on, assume that $u_3^3$ is red.
	Then, the edge $v^1_3$ is blue or we found $\ell$-cycles $\P_R \cup \{u_3^3,v^1_3\}$ and $\C_B'$ which are as desired for the theorem.	Now consider a set $Z'_2\subseteq Z_2$ of size $k-3\ell$. By the maximality of~$\C_B$ and taking  into account that $v^1_3$ is blue, we see that the edge $$b := f_{m_p}^{+} \cup Z'_2 \cup V_{[2\ell] \setminus [\ell]}(e_3) \cup e_3^+$$  has to be red (see Figure~\ref{fig:2}).
	
	\begin{figure}[h]
		\centering
		\begin{tikzpicture}
		\node[inner sep=0pt] (hyp) at (0,-0.4)
		{\includegraphics[width=.6\textwidth]{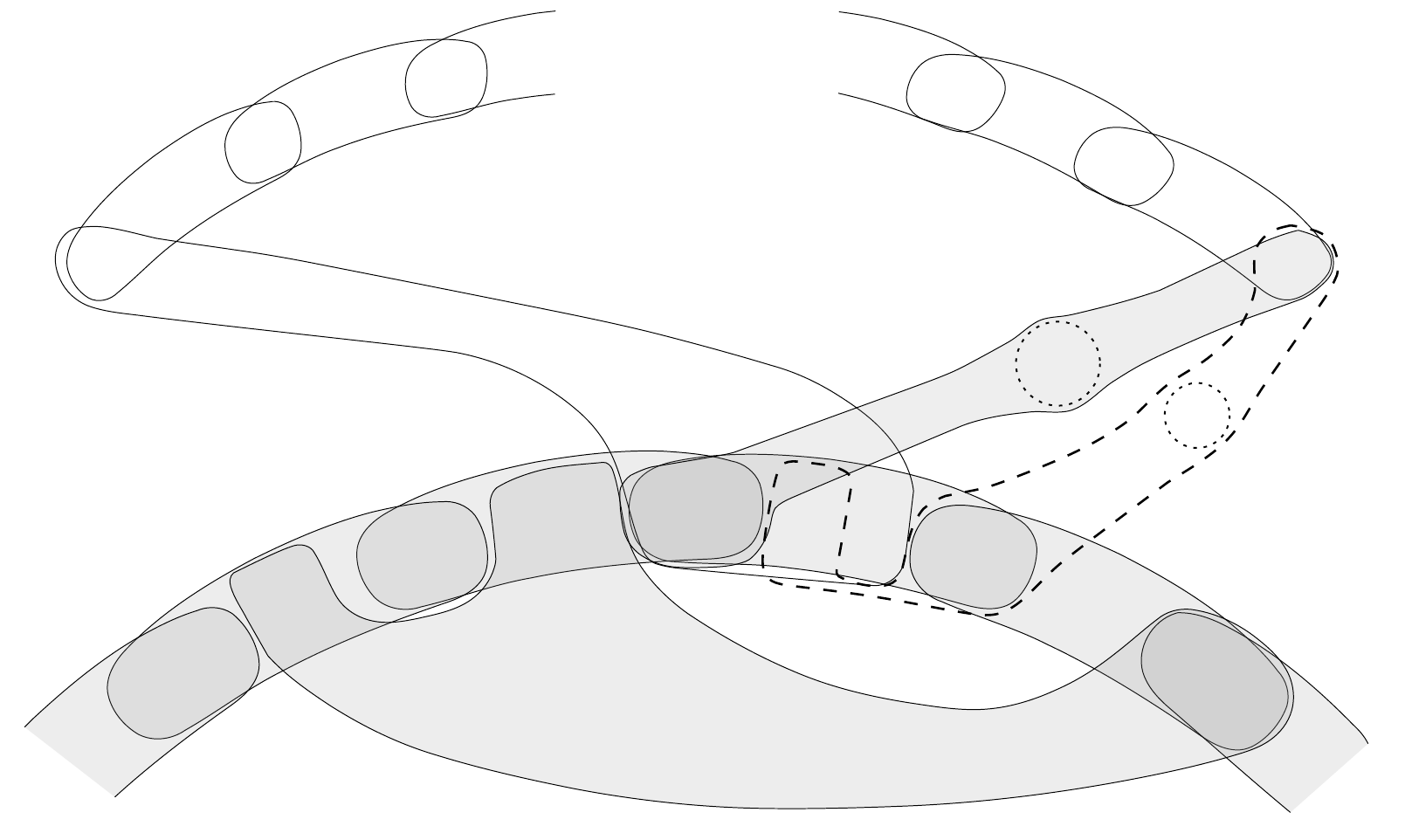}};
		\node (f1) at (-2.8,0.85) {$f_1$};
		\node (fmp) at (2.8,0.8) {$f_{m_p}$};
		\node (dots-_p) at (0,1.6) {$\dots$};
		\node (q1) at (0,-2) {$q_1$};
		\node (u33) at (-0.5,-0.2) {$u^3_3$};
		\node[rotate=75] (dots-_c1) at (-3.8,-2.4) {$\ddots$};
		\node (v13) at (1.3,0.2) {$v^1_3$};
		\node (b) at (2.6,-0.9) {$b$};
		\node (dots-_c2) at (3.65,-2.4) {$\ddots$};
		\end{tikzpicture}
		\caption{Diagram of edges $q_1, u_3^3, v_3^1$ and $b$. The dotted circles inside $v^1_3$ and $b$ are the sets $Z_1$ and $Z_2'$, respectively.}
		\label{fig:2}
	\end{figure}
	
	But then the $\ell$-cycles $\P_R \cup \{u_3^3,b\}$ and $\C_B'$ give the desired output  of the theorem. 
\end{proof}

We are now ready to prove Theorem~\ref{thm:RB-ell-cycles} (b). For this, first assume that $v^1_4$ is  red. Then, by~\eqref{as:g1_k/3} and by Claim~\ref{cla:C_R_k/3}, we know that $ (\C_R \setminus \{q_1,q_2\} ) \cup \P_R \cup \{a,v^1_4\}$ is a red $\ell$-cycle,  as desired for the theorem.

From now on assume $v^1_4$ is blue.
Then
$c := f_{m_p}^{+} \cup \mathring{e}_4 \cup V_{[2\ell]\setminus[\ell]}(e_5)$
is red, as otherwise the cycle obtained by replacing $e_4 \in \C_B$ with the edges $v^1_4, c$  yields a contradiction to the maximality of~$\C_B$. 
So, by \eqref{as:g1_k/3}, and since we chose $c$ so that it meets $q_5$ in exactly $\ell$ vertices, $$\C_R':=\big(\C_R \setminus \{q_2,q_3,q_4\}\big) \cup \P_R \cup \{a, c\}$$ is a red $\ell$-cycle covering all  vertices, except the $3(k-\ell)$ vertices lying in $$Z_1\cup Z_2\cup W \cup ( \mathring{e}_5\setminus c ) \cup e_5^+ \cup e_6^+ \cup e_7^+,$$ where $W$ is a set of $\ell$ vertices outside $\C_B\cup \P_R$ disjoint from $Z_1\cup Z_2$. 

Consider $d:= V_{[k-\ell]\setminus[k-2\ell]}(f_{m_p}) \cup Z_2\cup e_7^+$. Observe that either the $\ell$-cycle
$$\big(\C_R \setminus \{q_2,q_3\}\big) \cup \P_R \cup \{a, d\}$$ is red, and then it covers all but $2(k-\ell)$ vertices, as desired for the theorem, or the edge $d$ is blue, which we will assume from now on. Then 
by the maximality of $\C_B$,  the edge $d':=V_{[k-\ell]\setminus[k-2\ell]}(f_{m_p}) \cup Z_1\cup e_6^+$ is red. Now consider the edge $$e:=e_6^+ \cup Z_2 \cup e_7^+.$$ 
 If $e$ is blue, consider the blue path $e$ and the red cycle $\C_R'$ to obtain a contradiction to the maximality of $\C_B$ in the choice of $\C_B$ and $\P_R$.

So $e$ is red. Then
the $\ell$-cycle
$$\big(\C_R \setminus \{q_2,q_3\}\big) \cup \P_R \cup \{a, d', e\}$$ is red and covers all but $k-\ell$ vertices, as desired for the theorem.
 
This concludes the proof of Theorem~\ref{thm:RB-ell-cycles} (b).

\bibliographystyle{alpha}
\bibliography{BS17}

\end{document}